\DeclareMathOperator{\orb}{orb}
\newcommand{\zeroone}{\mbox{$0$-$1$}}
\theoremstyle{thmstyleone}%
\newtheorem{theorem}{Theorem}%  meant for continuous numbers
\newtheorem{lemma}{Lemma}[section]
\newtheorem{proposition}{Proposition}[section]% 
\theoremstyle{thmstyletwo}%
\newtheorem*{remark}{Remark}%
\newtheorem{corollary}{Corollary}[section]
\newtheorem*{problem}{Problem}
\theoremstyle{thmstylethree}%
\newtheorem{definition}{Definition}%
\begin{document}

\title[Minimal Balanced Collections]{Enumerating Minimal Balanced Collections}

%%=============================================================%%
%% GivenName	-> \fnm{Joergen W.}
%% Particle	-> \spfx{van der} -> surname prefix
%% FamilyName	-> \sur{Ploeg}
%% Suffix	-> \sfx{IV}
%% \author*[1,2]{\fnm{Joergen W.} \spfx{van der} \sur{Ploeg} 
%%  \sfx{IV}}\email{iauthor@gmail.com}
%%=============================================================%%

\author*[1]{\fnm{Mikhail} \sur{Bludov}}\email{bludov.mv@phystech.edu}

\author*[1]{\fnm{Nikolai} \sur{Zuev}}\email{zuev.nk@phystech.edu}
%\equalcont{These authors contributed equally to this work.}

%\author[1,2]{\fnm{Third} \sur{Author}}\email{iiiauthor@gmail.com}
%\equalcont{These authors contributed equally to this work.}

\affil[1]{\orgdiv{Laboratory of Combinatorial and Geometric Structures}, \orgname{Moscow Institute of Physics and Technology}, \orgaddress{\street{9 Institutskiy per.}, \city{Dolgoprudny}, \postcode{141701}, \state{Moscow Region}, \country{Russian Federation}}}

%\affil[2]{\orgdiv{Department}, \orgname{Organization}, \orgaddress{\street{Street}, \city{City}, \postcode{10587}, \state{State}, \country{Country}}}

%\affil[3]{\orgdiv{Department}, \orgname{Organization}, \orgaddress{\street{Street}, \city{City}, \postcode{610101}, \state{State}, \country{Country}}}

%%==================================%%
%% Sample for unstructured abstract %%
%%==================================%%

\abstract{In this note, we explore the combinatorics of balanced collections. A collection of subsets of the set $[n] = \{1, \dots, n\}$ is called \emph{balanced} if the relative interior of the convex hull of the corresponding characteristic vectors intersects the main diagonal of the $n$-dimensional cube at a point other than the origin, and it is called \emph{minimal} if it contains no proper balanced subcollections. We determine the asymptotic number of minimal balanced collections. Specifically, if $B_n$ denotes their total number, then
\[
B_n=\frac{2^{n^2-n+1}}{n!}\bigl(1+o(1)\bigr)
\qquad\text{as }n\to\infty.
\]}

%%================================%%
%% Sample for structured abstract %%
%%================================%%

\keywords{Balanced Collections, 0-1 Matrices, Fractional Matchings, Column Complementation}

\pacs[MSC Classification]{05A16, 15B33}

\maketitle

\section{Introduction}

This paper studies the combinatorics and enumeration of balanced
collections, a class of set systems admitting positive weights that cover
each element of the ground set equally.

Balanced collections were introduced by Bondareva~\cite{Bon} and
Shapley~\cite{Sh67} in their characterization of cooperative games with
nonempty cores.

\begin{definition}
Let $\Phi\subseteq 2^{[n]}$, and let
$\mathbf{1}=(1,\ldots,1)^{\mathsf T}\in\mathbb{R}^n$.  We call $\Phi$
\emph{balanced} if there exist positive weights
$(\lambda_S)_{S\in\Phi}$ such that
\[
\sum_{S\in\Phi}\lambda_S\mathbf{1}_S=\mathbf{1},
\]
where $\mathbf{1}_S$ is the characteristic vector of $S$.  A balanced
collection is \emph{minimal} if none of its proper subcollections is
balanced.
\end{definition}

This concept admits a natural geometric interpretation.  Each subset
$S\subseteq[n]$ corresponds to a vertex of the cube $[0,1]^n$.  A collection
$\Phi$ is balanced if and only if the relative interior of the convex hull of
the corresponding vertices intersects the main diagonal of the cube at a
point other than $\mathbf{0}$.

Balanced collections are closely related to cooperative game theory.  A
cooperative game with transferable utility has a nonempty core if and only
if a certain system of inequalities holds for all minimal balanced
collections.  In~\cite{BalGame}, the authors study the geometric structure of
the set of balanced games and show that this set forms a cone (or, under
additional constraints, a polytope).  They further prove that the facets of
this cone are in bijection with the nontrivial minimal balanced collections,
that is, all minimal balanced collections except $\{[n]\}$.  Thus the problem
of enumerating minimal balanced collections arises naturally.

Minimal balanced collections also have a natural interpretation in fractional
matching theory.  Let $H=([n],E)$ be a simple hypergraph all of whose edges
are nonempty,
and let $A_H\in\{0,1\}^{[n]\times E}$ be its incidence matrix, where, for
$i\in[n]$ and $e\in E$, one has $(A_H)_{i,e}=1$ if $i\in e$ and
$(A_H)_{i,e}=0$ otherwise.  The fractional perfect-matching polytope of $H$ is
\[
P_{\mathrm{fpm}}(H)
:=\{x\in\mathbb{R}_{\geq0}^{E}:A_Hx=\mathbf{1}\}.
\]
Its points are the fractional perfect matchings of $H$.  As proved in
Section~\ref{sec:preliminaries}, the support of every vertex is a minimal
balanced collection on $[n]$.  Conversely, every minimal balanced collection
whose members are edges of $H$ is the support of a unique vertex.

Minimal balanced collections also generalize set partitions: every partition
of $[n]$ is a minimal balanced collection.  These connections make their
enumeration relevant to cooperative game theory, polyhedral geometry, and
hypergraph theory.

Despite their simple definition, minimal balanced collections are difficult
to enumerate.  Peleg developed an inductive procedure that constructs the
minimal balanced collections on $[n+1]$ from those on $[n]$~\cite{Peleg}.
Laplace Mermoud, Grabisch, and Sudh\"olter later implemented this procedure
and used it to generate all minimal balanced collections, and hence determine
their number, for $n\leq7$~\cite{Mermoud}.  Some restricted cases are better
understood; for example, minimal balanced collections consisting entirely of
two-element sets were characterized in~\cite{BluMus}.  No asymptotic formula
for the unrestricted number was previously known.

For comparison, Billera et al.~\cite{Billera2} call a collection
\emph{unbalanced} if the convex hull of its characteristic vectors does not
meet the main diagonal of $[0,1]^n$, and \emph{maximal unbalanced} if it is
inclusion-maximal with this property.  If $E_n$ denotes the number of maximal
unbalanced collections on $[n]$, they proved that, for $n\geq3$,
\[
2^{(n-1)(n-2)/2}<E_n<2^{(n-1)^2}.
\]

Let $B_n$ denote the number of minimal balanced collections on $[n]$.  Our
main result determines its asymptotic behavior.

\begin{theorem}\label{MainTh}
As $n\to\infty$,
\[
B_n=\frac{2^{n^2-n+1}}{n!}\bigl(1+o(1)\bigr).
\]
\end{theorem}

Write $\operatorname{vert}P$ for the vertex set of a polytope $P$.

\begin{corollary}\label{cor:fpm-vertices}
For every simple hypergraph $H$ on $[n]$ all of whose edges are nonempty,
\[
\bigl|\operatorname{vert}P_{\mathrm{fpm}}(H)\bigr|
\leq B_n.
\]
This bound is sharp for every $n$: equality holds when the edges of $H$ are
all the nonempty subsets of $[n]$.  Consequently, the maximum number of
vertices over all such hypergraphs is
\[
B_n=\frac{2^{n^2-n+1}}{n!}\bigl(1+o(1)\bigr)
\qquad\text{as }n\to\infty.
\]
\end{corollary}

The main tools are to translate the enumeration problem into the language of
\zeroone\ matrices, investigate the column-complementation action, and apply
Tikhomirov's theorem on the singularity of random \zeroone\ matrices.

\subsection*{Structure of the paper}

Section~2 introduces the matrix notation, establishes the correspondence
between minimal balanced collections and \zeroone\ matrices, and interprets
this correspondence in terms of fractional perfect matchings.  Section~3
explores properties of weight
vectors.  Section~4 studies the column-complementation action.  Section~5 proves
Theorem~\ref{MainTh}.  Appendix~A gives an exact formula for $B_{n,m}$ when
$m$ is fixed, and Appendix~B gives explicit upper and lower bounds on $B_n$
for finite $n$ without using Tikhomirov's theorem.

\section{Notation and preliminaries}\label{sec:preliminaries}

Throughout the paper, $1\leq m\leq n$.  For every positive integer $d$, write
$[d]=\{1,\ldots,d\}$ and let
$\mathbf{1}_d=(1,\ldots,1)^{\mathsf T}\in\mathbb{R}^d$.  We abbreviate
$\mathbf{1}_n$ to $\mathbf{1}$.  Let $\mathcal{M}_{n,m}$ denote the set of
all $n\times m$ matrices with entries in $\{0,1\}$, viewed as real matrices.
We identify each subset $S\subseteq[n]$ with its characteristic vector
$\mathbf{1}_S$.  Similarly, we identify a \zeroone\ row of length $m$ with a
subset of $[m]$.

Every $M\in\mathcal{M}_{n,m}$ can be written as
$M=(\mathbf{1}_{S_1},\ldots,\mathbf{1}_{S_m})$.  If the columns are distinct,
we associate with $M$ the collection
\[
\Phi(M)=\{S_1,\ldots,S_m\}.
\]
Conversely, any ordering of a collection $\Phi=\{S_1,\ldots,S_m\}$ determines
a matrix with columns $\mathbf{1}_{S_1},\ldots,\mathbf{1}_{S_m}$.  Thus
$\Phi(M)$ records the set of columns but not their order.

For a matrix $M\in\mathcal{M}_{n,m}$ of rank $m$, the equation
$M\lambda=\mathbf{1}$ has at most one solution.  Let
\[
\mathcal{I}_{n,m}
:=\{M\in\mathcal{M}_{n,m}:\operatorname{rank}M=m
\text{ and }M\lambda=\mathbf{1}
\text{ for some }\lambda\in\mathbb{R}^m\}.
\]
For $M\in\mathcal{I}_{n,m}$, the unique solution is denoted by $\lambda(M)$
and called the \emph{weight vector} of $M$.  Its coordinates are rational:
apply Cramer's rule to any nonsingular $m\times m$ row submatrix of $M$.
We write
\[
\mathcal{I}_{n,m}^{\neq0}
:=\{M\in\mathcal{I}_{n,m}:\lambda(M)_j\neq0\text{ for every }j\in[m]\}
\]
and
\[
\mathcal{I}_{n,m}^{>0}
:=\{M\in\mathcal{I}_{n,m}:\lambda(M)_j>0\text{ for every }j\in[m]\}.
\]

Let $B_{n,m}$ denote the number of minimal balanced collections
consisting of $m$ subsets, and let $B_n$ denote their total number.

The following proposition gives the basic correspondence between minimal
balanced collections and the matrix classes defined above.

\begin{proposition}\label{prop:matrix-correspondence}
If $M\in\mathcal{I}_{n,m}^{>0}$, then its columns are distinct and
$\Phi(M)$ is a minimal balanced collection.  Conversely, if
$\Phi=\{S_1,\ldots,S_m\}$ is a minimal balanced collection, then every matrix
whose columns are $\mathbf{1}_{S_1},\ldots,\mathbf{1}_{S_m}$, in any order,
belongs to $\mathcal{I}_{n,m}^{>0}$.
\end{proposition}

\begin{proof}
Suppose first that $M\in\mathcal{I}_{n,m}^{>0}$.  Since $M$ has full column
rank, its columns are distinct.  Its positive weight vector $\lambda(M)$
shows that $\Phi(M)$ is balanced.  If a proper subcollection were balanced,
we could extend its balancing weights by zeros to obtain a vector
$\mu\in\mathbb{R}^m$ satisfying $M\mu=\mathbf{1}$.  This contradicts the
uniqueness of $\lambda(M)$, because $\lambda(M)$ has no zero coordinates.
Hence $\Phi(M)$ is a minimal balanced collection.

Conversely, let $\Phi=\{S_1,\ldots,S_m\}$ be minimal balanced, and let $M$ be
a matrix whose columns are the corresponding characteristic vectors.  There
is a vector $\lambda\in\mathbb{R}_{>0}^m$ such that
$M\lambda=\mathbf{1}$.  Suppose that $\operatorname{rank}M<m$, and choose a
nonzero vector $v\in\ker M$.  Replacing $v$ by $-v$ if necessary, assume that
$v_i<0$ for some $i$, and set
\[
t:=\min_{i:v_i<0}\frac{\lambda_i}{-v_i},
\qquad
\mu:=\lambda+tv.
\]
Then $\mu\geq0$, at least one coordinate of $\mu$ is zero, and
$M\mu=\mathbf{1}$.  The sets indexed by
$\{i\in[m]:\mu_i>0\}$ therefore form a proper balanced subcollection of
$\Phi$, a contradiction.  Thus $\operatorname{rank}M=m$.  It follows that
$M\in\mathcal{I}_{n,m}$ and that its unique weight vector is the positive
vector $\lambda$; hence $M\in\mathcal{I}_{n,m}^{>0}$.
\end{proof}

In particular, every minimal balanced collection on $[n]$ has at most $n$
members and has a unique system of rational balancing weights.

\begin{lemma}\label{L1}
For all $1\leq m\leq n$,
\[
B_{n,m}=\frac{|\mathcal{I}_{n,m}^{>0}|}{m!}.
\]
\end{lemma}

\begin{proof}
By Proposition~\ref{prop:matrix-correspondence}, the matrices in
$\mathcal{I}_{n,m}^{>0}$ are precisely the ordered versions of minimal
balanced collections with $m$ members.  Since their columns are distinct,
each such collection has exactly $m!$ orderings.
\end{proof}

We now justify the correspondence used in
Corollary~\ref{cor:fpm-vertices}.  For a subfamily $\Phi\subseteq E$, let
$A_\Phi$ be the submatrix of $A_H$ formed by the columns indexed by $\Phi$.
A point $x\in P_{\mathrm{fpm}}(H)$ is a vertex if and only if
$A_{\operatorname{supp}(x)}$ has full column rank, where
\[
\operatorname{supp}(x):=\{e\in E:x_e>0\}.
\]

For completeness, we give a short proof of this criterion.  If the columns
indexed by $\operatorname{supp}(x)$ are linearly dependent, choose a nonzero
vector $d$ supported on $\operatorname{supp}(x)$ with $A_Hd=0$.  For all
sufficiently small $\varepsilon>0$, both $x+\varepsilon d$ and
$x-\varepsilon d$ are nonnegative and feasible, so $x$ is not a vertex.
Conversely, if $x=(y+z)/2$ for feasible $y,z$, then nonnegativity implies
$y_e=z_e=0$ whenever $x_e=0$.  Hence
\[
A_{\operatorname{supp}(x)}
\bigl(y_{\operatorname{supp}(x)}-z_{\operatorname{supp}(x)}\bigr)=0.
\]
If $A_{\operatorname{supp}(x)}$ has full column rank, then $y=z=x$, so $x$
is a vertex.

Let $x$ be a vertex and set $\Phi=\operatorname{supp}(x)$ and
$m=|\Phi|$.  Then $A_\Phi$ has full column rank and
$A_\Phi x_\Phi=\mathbf{1}$ with $x_\Phi>0$.  Therefore
$A_\Phi\in\mathcal{I}_{n,m}^{>0}$, so
Proposition~\ref{prop:matrix-correspondence} implies that $\Phi$ is a minimal
balanced collection.

Conversely, let $\Phi\subseteq E$ be a minimal balanced collection.  By
Proposition~\ref{prop:matrix-correspondence}, $A_\Phi$ has full column rank
and its unique balancing weights are positive.  Extend these weights by zero
on $E\setminus\Phi$ to obtain $x^\Phi\in P_{\mathrm{fpm}}(H)$.  Since
$\operatorname{supp}(x^\Phi)=\Phi$ and $A_\Phi$ has full column rank,
$x^\Phi$ is a vertex.  Hence taking supports gives a bijection between the
vertices of $P_{\mathrm{fpm}}(H)$ and the minimal balanced collections whose
members are edges of $H$.

\section{Weight vectors}

For $\lambda\in\mathbb{R}^m$, let
\[
U(\lambda):=\{u\in\{0,1\}^m:u\lambda=1\}.
\]
We call the elements of $U(\lambda)$ the \emph{unifying rows} of $\lambda$.
For any set $V$ of vectors, write $\operatorname{rank}V$ for the dimension of
its linear span.  If $A\in\mathcal{I}_{n,m}$ has weight vector $\lambda$, then
every row of $A$ belongs to $U(\lambda)$.

\begin{lemma}
Let $\lambda\in\mathbb{R}^m$ and $n\geq m$.  There exists a matrix
$A\in\mathcal{I}_{n,m}$ with $\lambda(A)=\lambda$ if and only if
\[
\operatorname{rank}U(\lambda)=m.
\]
If the coordinates of $\lambda$ are all nonzero, respectively all positive,
then such a matrix belongs to $\mathcal{I}_{n,m}^{\neq0}$, respectively
$\mathcal{I}_{n,m}^{>0}$.
\end{lemma}

\begin{proof}
If $A\in\mathcal{I}_{n,m}$ has weight vector $\lambda$, then the rows of $A$
belong to $U(\lambda)$.  Hence
\[
m=\operatorname{rank}A\leq\operatorname{rank}U(\lambda)\leq m.
\]
Conversely, suppose that $\operatorname{rank}U(\lambda)=m$.  Choose linearly
independent rows $u_1,\ldots,u_m\in U(\lambda)$ and, if $n>m$, repeat any of
them to obtain $n$ rows.  The resulting matrix $A$ has rank $m$ and satisfies
$A\lambda=\mathbf{1}$, so $A\in\mathcal{I}_{n,m}$ and
$\lambda(A)=\lambda$.  The final assertions follow directly from the
definitions of $\mathcal{I}_{n,m}^{\neq0}$ and
$\mathcal{I}_{n,m}^{>0}$.
\end{proof}

It follows that the set of positive weight vectors is independent of
$n\geq m$.  We denote it by
\[
\Lambda_m
:=\{\lambda\in\mathbb{R}_{>0}^m:
      \operatorname{rank}U(\lambda)=m\}.
\]
Equivalently, $\Lambda_m=\{\lambda(A):A\in\mathcal{I}_{n,m}^{>0}\}$ for
every $n\geq m$.

For each integer $k$ with $1\leq k<m$, the vector
$k^{-1}\mathbf{1}_m$ belongs to $\Lambda_m$.  Indeed,
$U(k^{-1}\mathbf{1}_m)$ is the family of all $k$-element subsets of $[m]$,
whose characteristic vectors span $\mathbb{R}^m$.

\begin{lemma}\label{minU}
For every $m\geq1$,
\[
\min_{\lambda\in\Lambda_m}|U(\lambda)|=m.
\]
\end{lemma}

\begin{proof}
Every $\lambda\in\Lambda_m$ satisfies
$\operatorname{rank}U(\lambda)=m$, so $|U(\lambda)|\geq m$.  Equality is
attained at $\lambda=\mathbf{1}_m$, for which $U(\lambda)$ consists of the $m$
standard basis vectors.
\end{proof}

\begin{lemma}\label{maxU}
For every $m\geq1$,
\[
\max_{\lambda\in\Lambda_m}|U(\lambda)|
=\binom{m}{\lceil m/2\rceil}.
\]
\end{lemma}

\begin{proof}
Let $r=\lceil m/2\rceil$.  For $m\geq2$, the vector
$\lambda=r^{-1}\mathbf{1}_m$ belongs to $\Lambda_m$, and $U(\lambda)$ is the
family of all $r$-element subsets of $[m]$.  It therefore has cardinality
$\binom{m}{r}$.  The same conclusion is immediate when $m=1$.

For the upper bound, fix $\lambda\in\Lambda_m$.  Since every coordinate of
$\lambda$ is positive, no two distinct members of $U(\lambda)$ are
comparable by inclusion: if $v\subsetneq u$, then
\[
(u-v)\lambda>0,
\]
whereas $u\lambda=v\lambda=1$.  Thus $U(\lambda)$ is an antichain in the
Boolean lattice.  Sperner's theorem gives
$|U(\lambda)|\leq\binom{m}{\lceil m/2\rceil}$.
\end{proof}

\section{Column complementation and orbit bounds}

For $I\subseteq[m]$ and $M=(a_{*1},\ldots,a_{*m})\in\mathcal{M}_{n,m}$,
let $z_I(M)$ be the matrix obtained by replacing each column $a_{*j}$ with
$\mathbf{1}-a_{*j}$ for $j\in I$.  Identifying subsets of $[m]$ with their
characteristic vectors in $\mathbb{F}_2^m$, the maps $z_I$ define a free
action of the additive group $\mathbb{F}_2^m$ on $\mathcal{M}_{n,m}$, where
$z_\varnothing$ is the
identity and
\[
z_I\circ z_J=z_{I\mathbin{\vartriangle}J},
\]
where $\vartriangle$ denotes symmetric difference.  We write $\orb(M)$ for
the orbit of $M$ and set
\[
\orb_{\neq0}(M) := \orb(M) \cap \mathcal{I}_{n,m}^{\neq0},
\qquad
\orb_{>0}(M) := \orb(M) \cap \mathcal{I}_{n,m}^{>0}.
\]

\begin{lemma}\label{Iaction}
Let $M\in\mathcal{I}_{n,m}^{\neq0}$ have weight vector $\lambda$, let
$I\subseteq[m]$, and put $s_I=\sum_{j\in I}\lambda_j$.  If $s_I=1$, then
\[
\operatorname{rank}z_I(M)=m-1.
\]
If $s_I\neq1$, then $z_I(M)\in\mathcal{I}_{n,m}^{\neq0}$ and
\begin{equation}\label{eq:inverted-weights}
\lambda(z_I(M))_j
=\begin{cases}
\displaystyle-\frac{\lambda_j}{1-s_I},&j\in I,\\[1.2ex]
\displaystyle \frac{\lambda_j}{1-s_I},&j\notin I.
\end{cases}
\end{equation}
Consequently, if $s_I<1$, the signs change exactly on $I$; if $s_I>1$,
they change exactly on $[m]\setminus I$.
\end{lemma}

\begin{proof}
Write $M=(a_{*1},\ldots,a_{*m})$.  To determine the rank of $z_I(M)$,
consider a linear relation among its columns.  Thus, let
$\alpha=(\alpha_1,\ldots,\alpha_m)^{\mathsf T}\in\mathbb{R}^m$ be the vector
of coefficients in such a relation:
\[
\sum_{j\notin I}\alpha_j a_{*j}
+\sum_{j\in I}\alpha_j(\mathbf{1}-a_{*j})=0.
\]
Put $c=\sum_{j\in I}\alpha_j$.  Since
$\mathbf{1}=\sum_{j=1}^m\lambda_j a_{*j}$, this relation becomes
\[
\sum_{j\notin I}(\alpha_j+c\lambda_j)a_{*j}
+\sum_{j\in I}(-\alpha_j+c\lambda_j)a_{*j}=0.
\]
The columns of $M$ are independent.  Therefore
\[
\alpha_j=
\begin{cases}
c\lambda_j,&j\in I,\\
-c\lambda_j,&j\notin I.
\end{cases}
\]
Summing over $j\in I$ gives $c=cs_I$.  If $s_I\neq1$, then $c=0$ and
$\alpha=0$, so the columns of $z_I(M)$ are independent.  If $s_I=1$, the
displayed coefficients with $c=1$ give a nonzero relation, and every relation
is a scalar multiple of it.  Hence $\operatorname{rank}z_I(M)=m-1$.

Suppose now that $s_I\neq1$.  The product of $z_I(M)$ with the vector on the
right-hand side of~\eqref{eq:inverted-weights} is
\[
\frac{1}{1-s_I}
\left(
\sum_{j\notin I}a_{*j}\lambda_j
-\sum_{j\in I}(\mathbf{1}-a_{*j})\lambda_j
\right)
=\frac{M\lambda-s_I\mathbf{1}}{1-s_I}
=\mathbf{1}.
\]
Thus $z_I(M)\in\mathcal{I}_{n,m}$, and the displayed vector is its unique
weight vector.  None of its coordinates vanishes, and the sign assertions
follow.
\end{proof}

\begin{corollary}\label{OrbandUnif}
For every $M\in\mathcal{I}_{n,m}^{\neq0}$,
\[
|\orb_{\neq0}(M)|=2^m-|U(\lambda(M))|.
\]
Moreover, $|U(\lambda(M'))|=|U(\lambda(M))|$ for every
$M'\in\orb_{\neq0}(M)$.
\end{corollary}

\begin{proof}
Because the action is free, the matrices in $\orb(M)$ are in
bijection with the subsets $I\subseteq[m]$.  By Lemma~\ref{Iaction}, the
matrix $z_I(M)$ fails to belong to $\mathcal{I}_{n,m}^{\neq0}$ precisely when
\(
\sum_{j\in I}\lambda(M)_j=1.
\)
The number of such subsets is $|U(\lambda(M))|$, which proves the formula.
If $M'\in\orb_{\neq0}(M)$, then $M$ and $M'$ have the same orbit, so
the formula also gives $|U(\lambda(M'))|=|U(\lambda(M))|$.
\end{proof}

\begin{lemma}
\label{PosOrb}
For $m\geq2$ and every $M\in\mathcal{I}_{n,m}^{\neq0}$,
\[
|\orb_{>0}(M)|=2.
\]
\end{lemma}

\begin{proof}
Let $\lambda=\lambda(M)$ and set
\[
I_+=\{j\in[m]:\lambda_j>0\},\qquad
I_-=\{j\in[m]:\lambda_j<0\}.
\]
Put $p=\sum_{j\in I_+}\lambda_j$.  In each row equation of
$M\lambda=\mathbf{1}$, the contribution from indices in $I_+$ is at most
$p$, whereas the contribution from indices in $I_-$ is nonpositive.  Thus
\[
1\leq p.
\]
If $p=1$, every row of $M$ has entry $1$ in the columns indexed by $I_+$ and
entry $0$ in those indexed by $I_-$.  Thus the former columns are
$\mathbf{1}$ and the latter are $\mathbf{0}$, giving
$\operatorname{rank}M\leq1$, contrary to $m\geq2$.  Hence $p>1$, whereas
$\sum_{j\in I_-}\lambda_j\leq0<1$.

Lemma~\ref{Iaction} now shows that both $z_{I_+}(M)$ and $z_{I_-}(M)$ have
positive weight vectors.  Conversely, the sign rule in that lemma shows that
these are the only matrices in the orbit with positive weight vectors.  They
are distinct because the action is free and $I_+\neq I_-$.
\end{proof}

\begin{lemma}

\label{OrbLemma}
For $m\geq2$,
\[
\frac{2}{2^m-m}\,|\mathcal{I}_{n,m}^{\neq0}|
\;\le\;
|\mathcal{I}_{n,m}^{>0}|
\;\le\;
\frac{2}{2^m-\binom{m}{\lceil m/2\rceil}}\,|\mathcal{I}_{n,m}^{\neq0}|.
\]
\end{lemma}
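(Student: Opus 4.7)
The plan is to decompose $\mathcal{I}_{n,m}^{\neq 0}$ into the orbits of the $\mathbb{Z}_2^m$-action and to compare, orbit by orbit, the contribution to each side of the claimed inequality. First I would verify that the orbits really partition $\mathcal{I}_{n,m}^{\neq 0}$: by Corollary \ref{OrbandUnif}(1), whenever $M \in \mathcal{I}_{n,m}^{\neq 0}$ and $z_I(M) \in \mathcal{I}_{n,m}^{\neq 0}$, the image is again in $\mathcal{I}_{n,m}^{\neq 0}$, while the group law $z_I \circ z_J = z_{I \vartriangle J}$ ensures that $\mathrm{orb}_{\neq 0}(M) = \mathrm{orb}_{\neq 0}(M')$ for every $M' \in \mathrm{orb}_{\neq 0}(M)$.

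Next I would tabulate the cardinalities. By Corollary \ref{OrbandUnif}(2), each orbit satisfies $|\mathrm{orb}_{\neq 0}(M)| = 2^m - |U(\lambda(M))|$, and by Corollary \ref{OrbandUnif}(3) the value $|U(\lambda)|$ is constant along an orbit. Writing $\mathcal{O}$ for a typical orbit and $u_{\mathcal{O}} := |U(\lambda(M))|$ for any representative $M \in \mathcal{O}$, summation over orbits gives
\[
|\mathcal{I}_{n,m}^{\neq 0}| \;=\; \sum_{\mathcal{O}} \bigl(2^m - u_{\mathcal{O}}\bigr).
\]
On the other hand, Lemma \ref{PosOrb} says that every such orbit meets $\mathcal{I}_{n,m}^{>0}$ in exactly two matrices, whence $|\mathcal{I}_{n,m}^{>0}| = 2 \cdot \#\{\mathcal{O}\}$.

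Finally I would insert the uniform bounds on $u_{\mathcal{O}}$ supplied by Lemmas \ref{minU} and \ref{maxU}, namely $m \le u_{\mathcal{O}} \le \binom{m}{\lceil m/2 \rceil}$, which translate into
\[
2^m - \binom{m}{\lceil m/2 \rceil} \;\le\; 2^m - u_{\mathcal{O}} \;\le\; 2^m - m
\]
for every orbit $\mathcal{O}$. Summing over $\mathcal{O}$ and substituting $\#\{\mathcal{O}\} = |\mathcal{I}_{n,m}^{>0}|/2$ on the lower/upper side respectively produces exactly the two-sided estimate of the lemma.

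I do not anticipate a genuine obstacle: once the orbit decomposition is put in place, the argument is a clean counting exercise. The only delicate point is that the orbit sizes $2^m - u_{\mathcal{O}}$ are truly independent of the choice of representative, and this is precisely what the third part of Corollary \ref{OrbandUnif} is designed to guarantee, so the proof reduces to routine bookkeeping.
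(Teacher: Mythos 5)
Your proof is correct and follows essentially the same route as the paper: both decompose $\mathcal{I}_{n,m}^{\neq 0}$ into $\mathbb{Z}_2^m$-orbits, use Lemma \ref{PosOrb} for $|\mathrm{orb}_{>0}M|=2$, Corollary \ref{OrbandUnif} for $|\mathrm{orb}_{\neq 0}M|=2^m-|U(\lambda(M))|$, and Lemmas \ref{minU}, \ref{maxU} to bound $|U(\lambda)|$. The only cosmetic difference is that you sum orbit sizes explicitly while the paper sandwiches the ratio $|\mathcal{I}_{n,m}^{>0}|/|\mathcal{I}_{n,m}^{\neq 0}|$ between the minimal and maximal orbit-wise ratios.
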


\begin{proof}
Let $\mathcal{O}$ be an orbit meeting
$\mathcal{I}_{n,m}^{\neq0}$.  By Lemma~\ref{PosOrb}, it contains a matrix
$M_+\in\mathcal{I}_{n,m}^{>0}$ and
\[
|\mathcal{O}\cap\mathcal{I}_{n,m}^{>0}|=2.
\]
Since $\lambda(M_+)\in\Lambda_m$, Corollary~\ref{OrbandUnif} and
Lemmas~\ref{minU}--\ref{maxU} give
\[
|\mathcal{O}\cap\mathcal{I}_{n,m}^{\neq0}|
=2^m-|U(\lambda(M_+))|,
\qquad
m\leq|U(\lambda(M_+))|
\leq\binom{m}{\lceil m/2\rceil}.
\]
Therefore
\[
\frac{2}{2^m-m}
\leq
\frac{|\mathcal{O}\cap\mathcal{I}_{n,m}^{>0}|}
{|\mathcal{O}\cap\mathcal{I}_{n,m}^{\neq0}|}
\leq
\frac{2}{2^m-\binom{m}{\lceil m/2\rceil}}.
\]
Multiplying by $|\mathcal{O}\cap\mathcal{I}_{n,m}^{\neq0}|$ and summing over
all such orbits proves the result.
\end{proof}

\begin{remark}
Lemma~\ref{PosOrb} has a useful geometric interpretation.  Let $m\geq3$, and
let $v_1,\ldots,v_m$ be unit vectors in $\mathbb{R}^{m-1}$ such that every
$m-1$ of them are linearly independent, and consider the $m$ antipodal pairs
$\{v_i,-v_i\}$.  Among the $2^m$ ways of choosing one point from each pair,
exactly two give a simplex whose convex hull contains the origin.  The
existence of such a choice is also a special case of the colorful
Carath\'eodory theorem; see~\cite{Barany}.

For $m=4$, this observation is a key ingredient in Problem A--6 from the
53rd Putnam Mathematical Competition~\cite{Putnam}:
\begin{problem}[A--6]
Four points are chosen independently and uniformly at random on the surface
of a sphere.  What is the probability that the center of the sphere lies
inside the tetrahedron formed by these four points?
\end{problem}

A detailed solution of this problem, together with higher-dimensional
generalizations, appears in~\cite{GeneralPutnam}.
The video~\cite{TheHardestProblem} provides an accessible introduction to
this problem.
\end{remark}

\section{Asymptotic enumeration}

We now estimate the number of square matrices with nonzero weight vector.
The only probabilistic input is the following consequence of
Tikhomirov's Theorem~A~\cite{Tikhomirov2020}.

\begin{theorem}[Tikhomirov]\label{thm:tikhomirov}
Let $A_r$ be an $r\times r$ random matrix whose entries are independent and
uniform on $\{0,1\}$.  Then
\[
\mathbb{P}(\det A_r=0)
=\left(\frac12+o_r(1)\right)^r
\qquad\text{as }r\to\infty.
\]
\end{theorem}

For $r\geq1$, let
\[
\mathcal{S}_r:=\{A\in\mathcal{M}_{r,r}:\det A=0\},
\qquad
q_r:=\frac{|\mathcal{S}_r|}{2^{r^2}}.
\]
Thus $q_r$ is the proportion of singular \zeroone\ matrices of order $r$.
Theorem~\ref{thm:tikhomirov} states that
\begin{equation}\label{eq:singular-count}
q_r=\left(\frac12+o_r(1)\right)^r.
\end{equation}

\begin{lemma}\label{lem:almost-all-nonzero}
As $n\to\infty$,
\[
|\mathcal{I}_{n,n}^{\neq0}|=2^{n^2}\bigl(1-o(1)\bigr).
\]
\end{lemma}

\begin{proof}
For $M\in\mathcal{M}_{n,n}$ and $j\in[n]$, let $M^{(j)}$ be obtained by
replacing the $j$th column of $M$ by $\mathbf{1}$.  If $\det M\neq0$, then
$\lambda(M)=M^{-1}\mathbf{1}$, and Cramer's rule gives
\[
\lambda(M)_j=\frac{\det M^{(j)}}{\det M}.
\]
Hence
\[
M\in\mathcal{I}_{n,n}^{\neq0}
\quad\Longleftrightarrow\quad
\det M\neq0
\ \text{and}\
\det M^{(j)}\neq0\quad\text{for every }j\in[n].
\]

For $j\in[n]$, put
\[
\mathcal{E}_j
:=\{M\in\mathcal{M}_{n,n}:\det M^{(j)}=0\}.
\]
Let $\widehat{\mathcal{S}}_n$ be the set of singular \zeroone\ matrices of
order $n$ whose last column is $\mathbf{1}$.  Clearly,
\begin{equation}\label{eq:Ej-last-column}
|\mathcal{E}_j|=2^n|\widehat{\mathcal{S}}_n|.
\end{equation}

For $n\geq2$, we claim that
\begin{equation}\label{eq:last-column-count}
|\widehat{\mathcal{S}}_n|
=2^{n-1}|\mathcal{S}_{n-1}|.
\end{equation}
To prove this, fix $x\in\{0,1\}^{n-1}$.  The matrices with last column
$\mathbf{1}$ and first row $(x^{\mathsf T},1)$ are
\[
A(x,Y):=
\begin{pmatrix}
x^{\mathsf T} & 1\\
Y & \mathbf{1}_{n-1}
\end{pmatrix},
\qquad
Y\in\mathcal{M}_{n-1,n-1}.
\]
If $y_k$ is the $k$th column of $Y$, let $\tau_x(Y)$ be the \zeroone\ matrix
whose $k$th column is
\[
\begin{cases}
y_k, & x_k=0,\\
\mathbf{1}_{n-1}-y_k, & x_k=1.
\end{cases}
\]
The map $\tau_x$ is an involution: applying it twice returns $Y$.

For every $k$ with $x_k=1$, subtract the last column of $A(x,Y)$ from
column $k$ and then multiply column $k$ by $-1$.  Leave the remaining
columns unchanged.  These elementary column operations preserve
singularity and transform $A(x,Y)$ into
\[
\begin{pmatrix}
0 & 1\\
\tau_x(Y) & \mathbf{1}_{n-1}
\end{pmatrix}.
\]
Expanding the determinant of the transformed matrix along its first row
shows that it is singular if and only if $\tau_x(Y)$ is singular.  Hence
\[
A(x,Y)\text{ is singular}
\quad\Longleftrightarrow\quad
\tau_x(Y)\text{ is singular}.
\]
Since $\tau_x$ is a bijection, exactly $|\mathcal{S}_{n-1}|$ choices of
$Y$ make $A(x,Y)$ singular.  Summing over the $2^{n-1}$ choices of $x$
proves \eqref{eq:last-column-count}.

Equations~\eqref{eq:Ej-last-column} and
\eqref{eq:last-column-count} now give
\[
|\mathcal{E}_j|
=2^{2n-1}|\mathcal{S}_{n-1}|
=2^{n^2}q_{n-1}.
\]
By the Cramer-rule characterization above, every matrix outside
$\mathcal{I}_{n,n}^{\neq0}$ belongs either to $\mathcal{S}_n$ or to at least
one of the sets $\mathcal{E}_j$.  Hence
\[
2^{n^2}-|\mathcal{I}_{n,n}^{\neq0}|
\leq
|\mathcal{S}_n|+\sum_{j=1}^n|\mathcal{E}_j|.
\]
After division by $2^{n^2}$ and application of
\eqref{eq:singular-count}, this becomes
\[
1-\frac{|\mathcal{I}_{n,n}^{\neq0}|}{2^{n^2}}
\leq q_n+nq_{n-1}
=
\left(\frac12+o_n(1)\right)^n
+n\left(\frac12+o_{n-1}(1)\right)^{n-1}
=o(1).
\]
The result follows.
\end{proof}
\begin{corollary}\label{cor:Bnn-asymptotic}
As $n\to\infty$,
\[
B_{n,n}=\frac{2^{n^2-n+1}}{n!}\bigl(1+o(1)\bigr).
\]
\end{corollary}

\begin{proof}
For $n\geq2$, Lemmas~\ref{L1} and~\ref{OrbLemma} give
\[
\frac{2|\mathcal{I}_{n,n}^{\neq0}|}{n!(2^n-n)}
\leq B_{n,n}\leq
\frac{2|\mathcal{I}_{n,n}^{\neq0}|}
{n!\left(2^n-\binom{n}{\lceil n/2\rceil}\right)}.
\]
Apply Lemma~\ref{lem:almost-all-nonzero} and use
\[
\frac{n}{2^n}=o(1),
\qquad
\frac{\binom{n}{\lceil n/2\rceil}}{2^n}=O(n^{-1/2}).
\]
Both denominators in the preceding bounds are therefore
$2^n(1-o(1))$, and the stated asymptotic formula follows.
\end{proof}

\begin{proof}[Proof of Theorem~\ref{MainTh}]
We first show that collections with fewer than $n$ members are negligible.
For $2\leq m<n$, Lemmas~\ref{L1} and~\ref{OrbLemma}, together with
$|\mathcal{I}_{n,m}^{\neq0}|\leq2^{nm}$ and
$\binom{m}{\lceil m/2\rceil}\leq2^{m-1}$, give
\[
B_{n,m}
\leq
\frac{2|\mathcal{I}_{n,m}^{\neq0}|}
{m!\left(2^m-\binom{m}{\lceil m/2\rceil}\right)}
\leq
\frac{2^{nm-m+2}}{m!}.
\]
Combining this estimate with Corollary~\ref{cor:Bnn-asymptotic}, we obtain
\[
\frac{B_{n,m}}{B_{n,n}}
\leq
2\bigl(1+o(1)\bigr)\frac{n!}{m!}
2^{(m-n)(n-1)}.
\]
Put $k=n-m$.  Since $n!/m!\leq n^k$,
\[
\sum_{m=2}^{n-1}\frac{B_{n,m}}{B_{n,n}}
\leq
2\bigl(1+o(1)\bigr)
\sum_{k=1}^{n-2}\left(n2^{-(n-1)}\right)^k
=O(n2^{-n}).
\]
Finally, $B_{n,1}=1$, so $B_{n,1}/B_{n,n}=o(1)$.  Therefore
$B_n=B_{n,n}(1+o(1))$, and the result follows from
Corollary~\ref{cor:Bnn-asymptotic}.  Explicit bounds valid for every finite
$n$ are given in Appendix~\ref{app:finite-bounds}.
\end{proof}

\begin{proof}[Proof of Corollary~\ref{cor:fpm-vertices}]
The vertex-support correspondence established after
Proposition~\ref{prop:matrix-correspondence} identifies the vertices of
$P_{\mathrm{fpm}}(H)$ with the minimal balanced collections whose members are
edges of $H$.  There are at most $B_n$ such collections, and all of them occur
when $E$ contains every nonempty subset of $[n]$.
\end{proof}

\section*{Acknowledgment}

This work was carried out during the MIPT--LIPS-25 Summer Research Program at
the Phystech School of Applied Mathematics and Computer Science.

The authors are grateful to Andrey Kupavskii and Dylan Laplace Mermoud for
valuable discussions and remarks.  They also thank Dmitriy Gribanov and Roman
Karasev for reviewing the first draft of the paper.

\section*{Declarations}

\begin{itemize}
    \item This research is supported by the ``Priority 2030'' strategic academic leadership program.
    \item No data are associated with this manuscript.
    \item The authors used generative AI tools to improve the exposition and
    clarity of the manuscript and to help identify gaps in preliminary versions
    of the proofs. The main mathematical ideas and the overall structure of the
    proofs were developed by the authors, who verified the final arguments and
    take full responsibility for the content.
\end{itemize}

\bibliography{sn-bibliography-polished}% common bib file

@article{Bon,
  author  = {Bondareva, Olga N.},
  title   = {Some Applications of Linear Programming Methods to the Theory of
             Cooperative Games (in {Russian})},
  journal = {Problemy Kibernetiki},
  year    = {1963},
  volume  = {10},
  pages   = {119--139}
}

@article{Sh67,
  author  = {Shapley, Lloyd S.},
  title   = {On balanced sets and cores},
  journal = {Naval Research Logistics Quarterly},
  year    = {1967},
  volume  = {14},
  number  = {4},
  pages   = {453--460},
  doi     = {10.1002/nav.3800140404}
}

@article{BalGame,
  author  = {Garcia-Segador, Pedro and Grabisch, Michel and Miranda, Pedro},
  title   = {On the Set of Balanced Games},
  journal = {Mathematics of Operations Research},
  year    = {2025},
  volume  = {50},
  number  = {3},
  pages   = {2047--2072},
  doi     = {10.1287/moor.2023.0379}
}

@article{Peleg,
  author  = {Peleg, Bezalel},
  title   = {An inductive method for constructing minimal balanced collections
             of finite sets},
  journal = {Naval Research Logistics Quarterly},
  year    = {1965},
  volume  = {12},
  number  = {2},
  pages   = {155--162},
  doi     = {10.1002/nav.3800120203}
}

@article{Mermoud,
  author  = {Laplace Mermoud, Dylan and Grabisch, Michel and Sudh{\"o}lter, Peter},
  title   = {Minimal balanced collections and their application to core
             stability and other topics of game theory},
  journal = {Discrete Applied Mathematics},
  year    = {2023},
  volume  = {341},
  pages   = {60--81},
  doi     = {10.1016/j.dam.2023.07.025}
}

@article{BluMus,
  author  = {Bludov, Mikhail and Musin, Oleg},
  title   = {Balanced 2-Subsets},
  journal = {Mathematical Notes},
  year    = {2023},
  volume  = {114},
  number  = {3},
  pages   = {407--411},
  doi     = {10.1134/S0001434623090122}
}

@misc{Billera2,
  author        = {Billera, Louis J. and Moore, J. Tatch and Moraites, C. Dufort
                   and Wang, Y. and Williams, Katrina},
  title         = {Maximal unbalanced families},
  year          = {2012},
  eprint        = {1209.2309},
  archivePrefix = {arXiv},
  primaryClass  = {math.CO},
  url           = {https://arxiv.org/abs/1209.2309}
}

@article{Barany,
  author  = {B{\'a}r{\'a}ny, Imre},
  title   = {A generalization of {Carath{\'e}odory's} theorem},
  journal = {Discrete Mathematics},
  year    = {1982},
  volume  = {40},
  number  = {2--3},
  pages   = {141--152},
  doi     = {10.1016/0012-365X(82)90115-7}
}

@article{Putnam,
  author  = {Klosinski, L. F. and Alexanderson, G. L. and Larson, L. C.},
  title   = {The {Fifty-Third William Lowell Putnam Mathematical Competition}},
  journal = {American Mathematical Monthly},
  year    = {1993},
  volume  = {100},
  number  = {8},
  pages   = {755--767}
}

@article{GeneralPutnam,
  author  = {Howard, Ralph and Sisson, Paul},
  title   = {Capturing the Origin with Random Points: Generalizations of a
             {Putnam} Problem},
  journal = {College Mathematics Journal},
  year    = {1996},
  volume  = {27},
  number  = {3},
  pages   = {186--192},
  doi     = {10.1080/07468342.1996.11973774}
}

@misc{TheHardestProblem,
  author = {{3Blue1Brown}},
  title  = {The hardest problem on the hardest test},
  year   = {2017},
  note   = {YouTube video},
  url    = {https://youtu.be/OkmNXy7er84}
}

@article{Tikhomirov2020,
  author  = {Tikhomirov, Konstantin},
  title   = {Singularity of random {Bernoulli} matrices},
  journal = {Annals of Mathematics},
  year    = {2020},
  volume  = {191},
  number  = {2},
  pages   = {593--634},
  doi     = {10.4007/annals.2020.191.2.6}
}
%% if required, the content of .bbl file can be included here once bbl is generated
%%\input sn-article.bbl

\clearpage
\begin{appendices}

\section{Enumeration of \texorpdfstring{$B_{n,m}$}{B(n,m)} for fixed
\texorpdfstring{$m$}{m}}

The weight-vector description also gives a constructive procedure for computing
$B_{n,m}$ when $m$ is fixed.  We record this procedure and some of its
consequences separately from the asymptotic argument above.

The set $\Lambda_m$ is finite.  Indeed, for every $\lambda\in\Lambda_m$ we
may choose $m$ linearly independent vectors from $U(\lambda)$ and use them
as the rows of an $m\times m$ matrix $A$ with entries in $\{0,1\}$.  Then
$A\lambda=\mathbf{1}_m$, so $\lambda$ is uniquely determined by
$A$, and there are only finitely many choices for $A$.

For $\lambda\in\Lambda_m$ and $m\leq k\leq |U(\lambda)|$, set
\[
C_k(\lambda)
:=
\bigl|\{U'\subseteq U(\lambda): |U'|=k,
\ \operatorname{rank}U'=m\}\bigr|.
\]

\begin{lemma}\label{ConstructiveLemma}
For all $1\leq m\leq n$,
\[
|\mathcal{I}_{n,m}^{>0}|
=
\sum_{\lambda\in\Lambda_m}
\sum_{k=m}^{|U(\lambda)|}
C_k(\lambda)
\sum_{\ell=0}^{k}(-1)^{k-\ell}\binom{k}{\ell}\ell^n.
\]
\end{lemma}

\begin{proof}
For $\lambda\in\Lambda_m$, let
\[
\mathcal{A}(\lambda)
:=\{A\in\mathcal{I}_{n,m}^{>0}:\lambda(A)=\lambda\}.
\]
The uniqueness of the weight vector gives the disjoint union
\(
\mathcal{I}_{n,m}^{>0}
=\bigsqcup_{\lambda\in\Lambda_m}\mathcal{A}(\lambda).
\)
Fix $A\in\mathcal{A}(\lambda)$ and let $U'$ be the set of its distinct rows.
Then $U'\subseteq U(\lambda)$, and $\operatorname{rank}U'=m$ because $A$ has
rank $m$.  Conversely, after choosing such a set $U'$ of cardinality $k$, a
matrix with $U'$ as its set of distinct rows is the same as a surjection from
$[n]$ onto $U'$.  By inclusion--exclusion, the number of these surjections is
\[
\sum_{\ell=0}^{k}(-1)^{k-\ell}\binom{k}{\ell}\ell^n.
\]
Summing first over $U'$, then over $k$ and $\lambda$, proves the formula.
\end{proof}

Combining Lemmas~\ref{ConstructiveLemma} and~\ref{L1} yields the explicit
formula
\begin{equation}\label{eq:constructive-Bnm}
B_{n,m}
=
\frac{1}{m!}
\sum_{\lambda\in\Lambda_m}
\sum_{k=m}^{|U(\lambda)|}
C_k(\lambda)
\sum_{\ell=0}^{k}(-1)^{k-\ell}\binom{k}{\ell}\ell^n.
\end{equation}
Thus, knowing $\Lambda_m$ and the integers $C_k(\lambda)$ determines
$B_{n,m}$ for every $n\geq m$.

The numbers $C_k(\lambda)$ can be computed directly.  The sets $\Lambda_m$
can be generated recursively by the following weight-vector reformulation of
Peleg's induction~\cite{Peleg}; we follow the complete algorithmic formulation
and proof given in~\cite[Section~4.1, especially Theorem~4.5]{Mermoud}.  The
first case of that formulation leaves the weight vector unchanged, while its
remaining three cases give the operations below.  Start with
$\Lambda_1=\{(1)\}$ and
assume that $\Lambda_j$ is known for every $j<m$.  In the third operation,
take all possible zero-padding embeddings.

\begin{enumerate}
    \item Let $\lambda\in\Lambda_{m-1}$ and $I\subseteq[m-1]$ satisfy
    $\sum_{i\in I}\lambda_i<1$.  Insert
    $1-\sum_{i\in I}\lambda_i$ in any coordinate position.

    \item Let $\lambda\in\Lambda_{m-1}$, let $I\subseteq[m-1]$, and let
    $\sigma\in[m-1]\setminus I$ satisfy
    \[
    1-\lambda_\sigma<\sum_{i\in I}\lambda_i<1.
    \]
    Insert $1-\sum_{i\in I}\lambda_i$ in any coordinate position and
    subtract the same quantity from $\lambda_\sigma$.

    \item Let $\lambda\in\Lambda_k$ and $\omega\in\Lambda_\ell$, where
    $k,\ell<m$.  Pad them with zeros to obtain
    $\widehat\lambda,\widehat\omega\in\mathbb{Q}^m$, with no coordinate at
    which both vectors vanish.  For $I\subseteq[m]$, put
    \[
    t_I=
    \frac{1-\sum_{i\in I}\widehat\lambda_i}
    {\sum_{i\in I}\widehat\omega_i-
     \sum_{i\in I}\widehat\lambda_i},
    \]
    whenever the denominator is nonzero.  If $t_I\in(0,1)$, form
    \[
    \lambda'
    =t_I\widehat\omega+(1-t_I)\widehat\lambda.
    \]
    Retain $\lambda'$ precisely when $\operatorname{rank}U(\lambda')=m$.
\end{enumerate}

For completeness, we trace Peleg's construction to the first step at which the
resulting collection has $m$ members.  Immediately before that step it has
fewer than $m$ members.  Since the first case preserves both the number of
members and the weight vector, the first appearance of $m$ members must occur
in one of the three cases above.  Thus, iterating these operations and
removing duplicates produces $\Lambda_m$.
Substitution into~\eqref{eq:constructive-Bnm} gives, for $m\leq4$,
\begin{align*}
B_{n,1}&=1,\\
B_{n,2}&=2^{n-1}-1,\\
B_{n,3}&=3^{n-1}-2^n+1,\\
B_{n,4}&=\frac{1}{24}6^n+\frac{7}{24}4^n-2\cdot3^n
          +\frac{29}{8}2^n-\frac{8}{3}.
\end{align*}

The values obtained for $n\leq6$ are shown in Table~\ref{Table2}.

\begin{table}[ht]
\centering
\caption{Values of $B_{n,m}$.}\label{Table2}
\begin{tabular}{|c|c|c|c|c|c|c|}
\hline
$m\backslash n$ & 1 & 2 & 3 & 4 & 5 & 6 \\ \hline
1 & 1 & 1 & 1 & 1 & 1 & 1 \\ \hline
2 & 0 & 1 & 3 & 7 & 15 & 31 \\ \hline
3 & 0 & 0 & 2 & 12 & 50 & 180 \\ \hline
4 & 0 & 0 & 0 & 22 & 250 & 1910 \\ \hline
5 & 0 & 0 & 0 & 0 & 976 & 18780 \\ \hline
6 & 0 & 0 & 0 & 0 & 0 & 179312 \\ \hline
Total & 1 & 2 & 6 & 42 & 1292 & 200214 \\ \hline
\end{tabular}
\end{table}

The exact formula also determines the complete first-order asymptotics for
every fixed $m$.

\begin{corollary}
For each fixed $m\geq2$, as $n\to\infty$,
\[
B_{n,m}=
\begin{cases}
\displaystyle
\frac{1}{m!}\binom{m}{m/2}^{n}
+O_m\!\left(\left[\binom{m}{m/2}-1\right]^n\right),
& m \text{ even},\\[4mm]
\displaystyle
\frac{2}{m!}\binom{m}{(m+1)/2}^{n}
+O_m\!\left(\left[\binom{m}{(m+1)/2}-1\right]^n\right),
& m \text{ odd}.
\end{cases}
\]
For $m=1$, one has $B_{n,1}=1$.
\end{corollary}

\begin{proof}
By Lemma~\ref{maxU},
$|U(\lambda)|\leq\binom{m}{\lceil m/2\rceil}$ for every
$\lambda\in\Lambda_m$.  The equality cases of Sperner's theorem show that an
antichain attaining this bound is one of the complete middle layers of the
Boolean lattice.  Hence equality forces
\[
\lambda=\frac{2}{m}\mathbf{1}_m
\quad\text{if $m$ is even},
\]
and, if $m$ is odd, it forces one of the two possibilities
\[
\lambda=\frac{2}{m-1}\mathbf{1}_m
\qquad\text{or}\qquad
\lambda=\frac{2}{m+1}\mathbf{1}_m.
\]
Indeed, if $U(\lambda)$ is the complete layer of $r$-subsets, comparison of
two such subsets differing in one element shows that all coordinates of
$\lambda$ are equal, and their value is $1/r$.

For fixed $k$,
\[
\sum_{\ell=0}^{k}(-1)^{k-\ell}\binom{k}{\ell}\ell^n
=k^n+O_k((k-1)^n).
\]
For each weight vector displayed above, the only subset of $U(\lambda)$ of
cardinality $|U(\lambda)|$ is $U(\lambda)$ itself, and this set has rank
$m$.  Thus $C_{|U(\lambda)|}(\lambda)=1$.  Since $\Lambda_m$ is finite,
every remaining term is
\[
O_m\!\left(\left[\binom{m}{\lceil m/2\rceil}-1\right]^n\right).
\]
Substitution into~\eqref{eq:constructive-Bnm} proves the result.
\end{proof}

\section{Direct bounds for finite \texorpdfstring{$n$}{n}}
\label{app:finite-bounds}

This appendix gives bounds that hold for each finite $n$ and contain no
asymptotic notation.  Their proofs are entirely deterministic and do not use
Tikhomirov's theorem.

\begin{proposition}[Finite bounds]
For every $n\geq2$,
\begin{align}
\frac{2^{n^2-n-1}}{n!(2^n-n)}
&\leq B_n,
\label{eq:finite-lower}\\[2mm]
B_n
&<\frac{4}{n!}\,2^{n^2-n}.
\label{eq:finite-upper}
\end{align}
For $n=1$, one has $B_1=1$.
\end{proposition}

\begin{proof}
We begin with the lower bound.  Regard $\mathbf{1}_n$ as a vector in
$\mathbb{F}_2^n$.  The number of ordered bases over $\mathbb{F}_2$ of the form
\[
(\mathbf{1}_n,a_2,\ldots,a_n)
\]
is
\begin{equation}\label{eq:fixed-column-bases}
\prod_{k=1}^{n-1}(2^n-2^k)
=2^{n^2-n}\prod_{j=1}^{n-1}(1-2^{-j}).
\end{equation}
Indeed, after $k$ independent vectors have been chosen, their span contains
exactly $2^k$ vectors.

From such a basis define, in $\mathbb{F}_2^n$,
\[
b_i=a_{i+1}\quad(1\leq i<n),
\qquad
b_n=\mathbf{1}_n+\sum_{i=1}^{n-1}b_i,
\]
and let $M=(b_1,\ldots,b_n)$.  In the original basis
$(\mathbf{1}_n,b_1,\ldots,b_{n-1})$, the vectors $b_1,\ldots,b_{n-1}$ are
the last $n-1$ coordinate vectors, whereas $b_n$ has first coordinate $1$.
Thus $b_1,\ldots,b_n$ are linearly independent, so the columns of $M$ form a
basis over $\mathbb{F}_2$.  In particular, $\det M$ is odd, so
$M$ is also nonsingular over $\mathbb{Q}$.  Since
$\mathbf{1}_n=\sum_{i=1}^n b_i$ in $\mathbb{F}_2^n$, replacing any $b_j$
by $\mathbf{1}_n$ again gives a basis.  Thus, for every $j\in[n]$, the system
\[
\{\mathbf{1}_n\}\cup\{b_i:i\neq j\}
\]
is a basis over $\mathbb{F}_2$.  If the $j$th coordinate of the unique
solution to $M\lambda=\mathbf{1}_n$ were zero, replacing the $j$th column of
$M$ by $\mathbf{1}_n$ would give a singular matrix over $\mathbb{Q}$.  Its
integer determinant would therefore also vanish modulo~$2$, contradicting
the fact that the displayed system is a basis over $\mathbb{F}_2$.
Thus $M\in\mathcal{I}_{n,n}^{\neq0}$.  The construction is injective because
the first $n-1$ columns of $M$ recover $a_2,\ldots,a_n$.  It follows
from~\eqref{eq:fixed-column-bases} that
\[
|\mathcal{I}_{n,n}^{\neq0}|
\geq2^{n^2-n}\prod_{j=1}^{n-1}(1-2^{-j}).
\]
For numbers $x_j\in[0,1]$, one has
\(\prod_j(1-x_j)\geq1-\sum_jx_j\).  Therefore
\[
\prod_{j=1}^{n-1}(1-2^{-j})
\geq
\frac12\left(1-\sum_{j=2}^{\infty}2^{-j}\right)
=\frac14.
\]
Since $B_n\geq B_{n,n}$, Lemmas~\ref{L1} and~\ref{OrbLemma} give
\[
B_n\geq B_{n,n}
\geq\frac{2|\mathcal{I}_{n,n}^{\neq0}|}{n!(2^n-n)}
\geq\frac{2^{n^2-n-1}}{n!(2^n-n)},
\]
which is~\eqref{eq:finite-lower}.

For the upper bound, first note that $B_{n,1}=1$.  Fix $2\leq m<n$.
The span of the $m$ independent columns of a matrix in
$\mathcal{I}_{n,m}^{\neq0}$ contains at most $2^m$ \zeroone\ vectors.  To see
this, restrict the vectors in the span to the coordinates of any nonsingular
$m\times m$ minor; this restriction is injective.  Hence there are at least
$2^n-2^m$ choices of a \zeroone\ column whose addition preserves full rank.
Appending any such column changes the weight vector $\lambda$ to
$(\lambda,0)$.

For $t\in[m+1]$, let
\[
\mathcal{J}_t:=
\{M'\in\mathcal{I}_{n,m+1}:
\lambda(M')_t=0\text{ and }\lambda(M')_i\neq0
\text{ for every }i\neq t\}.
\]
The sets $\mathcal{J}_t$ are disjoint and have the same cardinality by column
symmetry.
Every extension above belongs to $\mathcal{J}_{m+1}$, and distinct pairs
consisting of a matrix and an appended column give distinct extensions.
\begin{samepage}
Since the union of the $\mathcal{J}_t$ contains at most $2^{n(m+1)}$
\zeroone\ matrices, column symmetry gives
\[
|\mathcal{I}_{n,m}^{\neq0}|(2^n-2^m)
\leq\frac{2^{n(m+1)}}{m+1},
\]
or equivalently
\[
|\mathcal{I}_{n,m}^{\neq0}|
\leq
\frac{2^{nm}}{(m+1)(1-2^{m-n})}.
\]
\end{samepage}
By Lemmas~\ref{L1} and~\ref{OrbLemma},
\[
B_{n,m}
\leq
\frac{2^{nm+1}}
{(m+1)!\left(2^m-\binom{m}{\lceil m/2\rceil}\right)
(1-2^{m-n})}
\qquad(2\leq m<n).
\]
Since
\(\binom{m}{\lceil m/2\rceil}\leq2^{m-1}\) and
\(1-2^{m-n}\geq1/2\), this implies
\[
B_{n,m}\leq\frac{8\,2^{m(n-1)}}{(m+1)!}
\qquad(2\leq m<n).
\]
For the square term, the trivial estimate
$|\mathcal{I}_{n,n}^{\neq0}|\leq2^{n^2}$ yields
\[
B_{n,n}\leq
\frac{2^{n^2+1}}
{n!\left(2^n-\binom{n}{\lceil n/2\rceil}\right)}.
\]

Suppose $n\geq5$.  Since
\(\binom{n}{\lceil n/2\rceil}/2^n\leq5/16\),
\[
\frac{n!B_{n,n}}{2^{n^2-n+1}}\leq\frac{16}{11}.
\]
Setting $k=n-m$ and using $n!/(m+1)!\leq n^{k-1}$, we also obtain
\[
\frac{n!}{2^{n^2-n+1}}
\sum_{m=2}^{n-1}B_{n,m}
\leq
2^{3-n}\sum_{k=1}^{n-2}
\left(\frac{n}{2^{n-1}}\right)^{k-1}
\leq\frac{4}{11}.
\]
Here we used $n/2^{n-1}\leq5/16$ and $2^{3-n}\leq1/4$.
Finally,
\(\frac{n!B_{n,1}}{2^{n^2-n+1}}<2/11\), because
\(2^{n^2-n+1}/n!>11/2\).
Adding the three estimates proves~\eqref{eq:finite-upper} for $n\geq5$.
The remaining cases follow from Table~\ref{Table2}:
$B_1=1$, $B_2=2$, $B_3=6$, and $B_4=42$.
\end{proof}

\end{appendices}

\end{document}